\newtheorem{theorem}{Theorem}
\newtheorem{lemma}[theorem]{Lemma}
\newtheorem{conjecture}[theorem]{Conjecture}
\begin{document}

\author{Bart\l{}omiej Bzd\c{e}ga}

\address{Adam Mickiewicz University, Pozna\'n, Poland}

\email{exul@amu.edu.pl}

\keywords{cyclotomic polynomial, coefficients, height of a polynomial}

\subjclass{11B83, 11C08}

\title{On a generalization of Beiter Conjecture}

\maketitle

\begin{abstract}
We prove that for every $\varepsilon>0$ and a nonnegative integer $\omega$ there exist primes $p_1,p_2,\ldots,p_\omega$ such that for $n=p_1p_2\ldots p_\omega$ the height of the cyclotomic polynomial $\Phi_n$ is at least $(1-\varepsilon)c_\omega M_n$, where $M_n=\prod_{i=1}^{\omega-2}p_i^{2^{\omega-1-i}-1}$ and $c_\omega$ is a constant depending only on $\omega$; furthermore $\lim_{\omega\to\infty}c_\omega^{2^{-\omega}}\approx0.71$. In our construction we can have $p_i>h(p_1p_2\ldots p_{i-1})$ for all $i=1,2,\ldots,\omega$ and any function $h:\mathbb{R}_+\to\mathbb{R}_+$.
\end{abstract}

\section{Introduction}

Let $\Phi_n$ be the $n$th cyclotomic polynomial, i.e. the unique monic polynomial irreducible over integers, which roots are all primitive $n$th roots of unity. We assume that $n=p_1p_2\ldots p_\omega$ and $2 < p_1 < p_2 < \ldots < p_\omega$ are primes, since $\Phi_{2n}(x)=\Phi_n(-x)$ for odd $n$ and $\Phi_{np}(x)=\Phi_n(x^p)$ for a prime $p$ dividing $n$. We call the number $\omega=\omega(n)$ the order of $\Phi_n$.

Let $A_n$ denotes the maximal absolute value of a coefficient of $\Phi_n$. We say shortly that $A_n$ is the height of $\Phi_n$. In case of $\omega\in\{0,1,2\}$ determining of $A_n$ is easy and we have $A_1=A_{p_1}=A_{p_1p_2}=1$. For $\omega=3$ it is known that $A_{p_1p_2p_3}\le\frac34p_1$ \cite{Bachman-TernaryBounds}. The Corrected Beiter Conjecture states that $A_{p_1p_2p_3}\le\frac23p_1$ (see \cite{GallotMoree-BeiterCounter} and references given there for details). The constant $\frac23$ is best possible if the conjecture is true.

For cyclotomic polynomials of any order we put
$$M_n = \prod_{i=1}^{\omega-2}p_i^{2^{\omega-1-i}-1},$$
where the empty product, which happens if $\omega\le2$, equals $1$. P.T.~Bateman, C.~Pomerance and R.C.~Vaughan proved in \cite{BatemanPomeranceVaughan-Size} that $A_n\le M_n$. In \cite{Bzdega-Height} the author proved that $A_n\le C_\omega M_n$, where $C_\omega^{2^{-\omega}}$ converges to approximately $0.95$ with $\omega\to\infty$. However, so far we have known no good general class of $\Phi_n$ for which $A_n$ is close to $C_\omega M_n$.

It has not been even known if $M_n$ gives the optimal order for the upper bound on $A_n$. For example we have $A_{p_1\ldots p_5}\le C_5 p_1^7p_2^3p_3$, but we did not know whether $A_{p_1\ldots p_5}\le C_5' p_1^8p_2^2p_3$ for some other constant $C_5'$. All known constructions of $\Phi_n$ with large height required that most prime factors of $n$ are of almost the same size.

One of the main purposes of this paper is to show that $M_n$ is optimal, i.e. in the upper bound on $A_n$ it cannot be replaced by any smaller product of the form $p_1^{\alpha_1}p_2^{\alpha_2}\ldots p_\omega^{\alpha_\omega}$ in a sense which we describe below.

For a fixed $\omega$ we define the following strict lexicographical order on $\mathbb{R}^\omega$:
\begin{align*}
& (\alpha_1,\alpha_2,\ldots,\alpha_\omega)\prec(\beta_1,\beta_2,\ldots,\beta_\omega) \\
\iff & \alpha_\omega=\beta_\omega,\alpha_{\omega-1}=\beta_{\omega-1},\ldots,\alpha_{k+1}=\beta_{k+1} \text{ and } \alpha_k<\beta_k \text { for some } k\le\omega.
\end{align*}

For $\alpha=(\alpha_1,\alpha_2,\ldots,\alpha_\omega)$ and $n=p_1p_2\ldots p_\omega$ we put $M_n^{(\alpha)}=p_1^{\alpha_1}p_2^{\alpha_2}\ldots p_\omega^{\alpha_\omega}$. Note that if $\alpha\prec\beta$ and $p_i$ is large enough compared to $p_1p_2\ldots p_{i-1}$ for all $i\le\omega$, then $M_n^{(\alpha)} < M_n^{(\beta)}$.

Therefore, we say that $M_n^{(\alpha)}$ is the optimal bound on $A_n$ for a fixed $\omega$ if there exists a constant $b_\omega$ such that $A_n\le b_\omega M_n^{(\alpha)}$ for all $n$ with $\omega(n)=\omega$ and $\alpha$ is smallest possible in sense of the order $\prec$.

It requires an explanation what it means that $p_i$ is large enough compared to $p_1p_2\ldots p_{i-1}$ for all $i\le\omega$. Let $h:\mathbb{R}_+\to\mathbb{R}_+$ be any function, preferably growing fast. We say that a sequence of primes $p_1,p_2,\ldots,p_\omega$ is $h$-\emph{growing} if $p_i\ge h(p_1p_2\ldots p_{i-1})$ for $i=1,2,\ldots,\omega$ (empty product equals $1$). With a small abuse of notation we will also write that the number $n=p_1p_2\ldots p_\omega$ is $h$-growing.

The following theorem is the main result of this paper.

\begin{theorem} \label{T large An}
For every $\omega\ge3$, $\varepsilon>0$ and $h:\mathbb{R}_+\to\mathbb{R}_+$ there exists an $h$-growing $n=p_1p_2\ldots p_\omega$ such that $A_n>(1-\varepsilon)c_\omega M_n$, where
$$M_n = \prod_{i=1}^{\omega-2}p_i^{2^{\omega-1-i}-1} \quad\text{and}\quad c_\omega = \frac1\omega\cdot\left(\frac2\pi\right)^{3\cdot2^{\omega-3}}\cdot\left(\prod_{k=3}^{\omega-1}k^{2^{\omega-1-k}}\right)^{-1}.$$
\end{theorem}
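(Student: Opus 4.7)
My plan is to prove the theorem by induction on $\omega\ge 3$. For the base case $\omega=3$, we have $M_n=p_1$ and $c_3=\tfrac{1}{3}(2/\pi)^3\approx 0.086$; any classical construction of ternary cyclotomic polynomials with $A_{p_1p_2p_3}\ge p_1/3$ suffices, and can be made $h$-growing by using Dirichlet's theorem to fix the required residue classes while taking each prime large enough.

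For the inductive step, suppose the theorem holds for $\omega$ primes. Given $\varepsilon>0$ and $h$, I would first apply the inductive hypothesis to obtain an $h'$-growing $m=p_1\cdots p_\omega$ (with $h'$ suitably dominating $h$) such that $\Phi_m$ has a coefficient at some position $k_0$ of absolute value at least $(1-\delta)c_\omega M_m$. I would then adjoin a new prime $p=p_{\omega+1}$, chosen by Dirichlet's theorem to lie in a prescribed residue class modulo $m$ and to exceed $h(m)$, forming $n=mp$. The identities
\[
M_n=M_m^2\cdot\frac{m}{p_\omega}\qquad\text{and}\qquad c_{\omega+1}=c_\omega^2\cdot\frac{\omega}{\omega+1},
\]
both direct from the closed-form definitions, reduce the target to exhibiting a coefficient of $\Phi_n$ of size at least $(1-\varepsilon)\,c_\omega^2\,(\omega/(\omega+1))\,M_m^2\,(m/p_\omega)$. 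The workhorse is $\Phi_n(x)\Phi_m(x)=\Phi_m(x^p)$; inverting $\Phi_m$ modulo a sufficiently high power of $x$ expresses a chosen coefficient of $\Phi_n$ as the convolution
\[
[x^K]\Phi_n=\sum_{pj+\ell=K}[x^j]\Phi_m\cdot[x^\ell]\Phi_m(x)^{-1}.
\]
Here the formal inverse is controlled via $\Phi_m(x)^{-1}\equiv\Psi_m(x)\sum_{t\ge 0}x^{tm}\pmod{x^N}$, where $\Psi_m=(x^m-1)/\Phi_m$ is the inverse cyclotomic polynomial whose coefficients are the quantities of interest. Fixing the residue class of $p$ modulo $m$ aligns the positions of large coefficients of $\Phi_m$ (indexed by $j$) with positions of large coefficients of $\Psi_m$ (indexed by $\ell\bmod m$), so that the convolution is dominated by $\Theta(m/p_\omega)$ sign-aligned terms each of size close to $A_m\cdot A_{\Psi_m}$.

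The main obstacle is twofold. First, one must ensure that $\Psi_m$ carries coefficients of magnitude comparable to $A_m$ at enough positions; this likely requires strengthening the inductive hypothesis to assert not merely one large coefficient of $\Phi_m$, but a matching large coefficient of $\Psi_m$, or equivalently a family of near-extremal coefficients of $\Phi_m$ at a controlled set of positions. Second, the factor $\omega/(\omega+1)$ must be extracted from an equidistribution or pigeonhole argument on the residues $K-pj\pmod{m}$ as $j$ ranges over the support of near-extremal coefficients of $\Phi_m$; the factor should reflect the loss of one dimension of freedom when the residue class of $p$ among the $\omega+1$ available primes is fixed to secure sign alignment. Threading the $h'$-growth condition through both steps so that the accumulated losses collapse into a single $(1-\varepsilon)$ factor, uniformly in $\omega$, is the main technical burden.
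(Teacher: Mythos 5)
Your route is genuinely different from the paper's, and its algebraic skeleton is right: the identities $M_{mp}=M_m^2\cdot m/p_\omega$ and $c_{\omega+1}=c_\omega^2\cdot\omega/(\omega+1)$ check out, and they are in fact an equivalent, ``squared'' rewriting of the telescoping recursion the paper actually uses (the paper proves $L_{np}\gtrsim\frac{2p}{\pi(\omega+1)}L_{p_1}L_{p_1p_2}\cdots L_{p_1\cdots p_\omega}$ and then telescopes). The difference is that the paper never touches individual coefficients until the very last line: it works throughout with $L_n=\max_{|z|=1}|\Phi_n(z)|$, exploits the exact multiplicativity $F_n(x)=F_m(px)/F_m(x)$, and controls the denominator near its zeros through the derivative quantity $D_n=\min_{F_n(x_0)=0}|f_n(x_0)|$, for which there is a clean recursion (Lemma \ref{L Dnp}, Theorem \ref{T Dn formula}); the binary case $\omega=2$ is handled by the explicit point evaluation of Theorem \ref{T binary}, and $A_n\ge L_n/n$ converts at the end.

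The gap is exactly where you flag it, and it is not a small one. Your convolution $[x^K]\Phi_n=\sum_{pj+\ell=K}[x^j]\Phi_m\cdot[x^\ell]\Phi_m^{-1}$ requires, to produce $\asymp c_\omega^2 M_m^2\cdot(m/p_\omega)$, that $\Theta(m/p_\omega)$ summands be simultaneously (a) near-maximal in $|[x^j]\Phi_m|$, (b) near-maximal in $|[x^\ell]\Phi_m^{-1}|$ (equivalently in $\Psi_m$), and (c) sign-coherent. None of these three is established, and (a)+(b) would force a strengthened induction hypothesis asserting a whole \emph{family} of near-extremal coefficients of $\Phi_m$ at controlled positions together with a matching family for $\Psi_m$ — something not known and certainly not supplied by the base case you cite ($A_{p_1p_2p_3}\ge p_1/3$ gives one large coefficient, with no control of its location or of $\Psi_{p_1p_2p_3}$). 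The count $\Theta(m/p_\omega)$ and the $\omega/(\omega+1)$ factor are asserted by analogy, not derived: you need a precise equidistribution statement for the residues $(K-pj)\bmod m$ restricted to the support of large coefficients of $\Psi_m$, and it is unclear how fixing one residue class for $p$ buys sign alignment across that many terms. The paper's move to the unit circle dissolves all of these difficulties at once — there, ``alignment'' is automatic because $|F_m(px)|/|F_m(x)|$ is evaluated at a single $x$ near a zero of $F_m$, and the $\omega/(\omega+1)$ comes transparently from the at-most-$\omega$ non-coprime integers near the maximizing argument (Lemma \ref{L Lnp}). As written, your proposal is a plausible program but not a proof: the two obstacles you name are the entire analytic content of the theorem.
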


By this theorem and the already mentioned result from \cite{Bzdega-Height}, $M_n$ is the optimal bound on $A_n$. Furthermore
$$\lim_{\omega\to\infty}c_\omega^{2^{-\omega}} = \left(\frac2\pi\right)^{3/8} \cdot \prod_{k=3}^\infty k^{-2^{-k-1}} \approx 0.71.$$
Let us define the $\omega$th Beiter constant in the following natural way:
$$B_\omega = \limsup_{\omega(n)=\omega}(A_n/M_n).$$
For example we know that $B_0=B_1=B_2=1$ and $\frac23\le B_3 \le \frac34$. If Corrected Beiter Conjecture is true, then $B_3=\frac23$.

For all $\omega$ we have
$$c+o(1) < B_\omega^{2^{-\omega}} < C+o(1), \quad \omega\to\infty$$
with $c\approx 0.71$ and $C\approx 0.95$. It would be interesting to know the asymptotics of $B_\omega$. For example, we expect that the following natural conjecture is true.

\begin{conjecture}\label{C BeiterGeneral}
There exists a limit $\lim_{\omega\to\infty}B_\omega^{2^{-\omega}}$.
\end{conjecture}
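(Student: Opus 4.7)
Setting $\beta_\omega := B_\omega^{2^{-\omega}}$, the excerpt records that $c + o(1) < \beta_\omega < C + o(1)$ with $c\approx 0.71$ and $C\approx 0.95$, so the conjecture is equivalent to showing that the bounded sequence $\{\beta_\omega\}$ converges. My plan is to establish the \emph{super-squaring inequality} $B_{\omega+1} \ge B_\omega^2$ for all sufficiently large $\omega$; since this is equivalent to the monotonicity $\beta_{\omega+1} \ge \beta_\omega$, it forces convergence when combined with the known upper bound.

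To prove super-squaring I would first fix $\eta > 0$ and a large $\omega$, and choose an $h$-growing $n = p_1\cdots p_\omega$ with $A_n \ge (B_\omega - \eta) M_n$, which exists by Theorem \ref{T large An}. I would then adjoin a new prime $p$ much larger than $h(n)$, subject to a carefully chosen residue condition modulo $n$, producing $n' = np$ of order $\omega+1$. A direct computation of the exponents in the definition of $M$ gives $M_{n'} = M_n^2 \cdot \prod_{i=1}^{\omega-1} p_i$, so the target inequality $A_{n'}/M_{n'} \ge (A_n/M_n)^2 (1 - o(1))$ reduces to the concrete coefficient bound
$$A_{n'} \;\ge\; A_n^2 \cdot \prod_{i=1}^{\omega-1} p_i \cdot (1 - o(1)).$$
This is where the residue condition enters: using the standard inclusion--exclusion recursion expressing coefficients of $\Phi_{n'}$ in terms of those of $\Phi_n$ (as in \cite{Bzdega-Height}), the residue of $p$ modulo $n$ controls how the cosine sums underlying the extreme coefficients of $\Phi_n$ combine into those of $\Phi_{n'}$; one has to show that at least one admissible residue class produces constructive reinforcement. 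Once this coefficient bound is established, letting $\eta\downarrow 0$ and $\omega\to\infty$ yields the monotonicity and hence convergence of $\beta_\omega$.

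The main obstacle is the coefficient estimate. In the proof of Theorem \ref{T large An} the primes $p_1,\ldots,p_\omega$ are built together inductively so that their mutual congruences preserve a particular product form of the underlying generating polynomial, and each squaring step is essentially automatic. Starting instead from an \emph{arbitrary} near-extremal $n$, whose generating polynomial may have no clean closed form, one must show that there still exists some admissible residue class of $p$ modulo $n$ producing the height-doubling. The most promising tactic seems to be an $L^2$ averaging argument over residue classes of $p$: compute the mean-square of $A_{np}$ as $p$ varies, then use a Cauchy--Schwarz lower bound to extract a single class attaining the required pointwise inequality. Doing this without losing a constant factor per recursion level --- which is precisely the loss that leaves the known upper bound stuck near $C\approx 0.95$ --- is the genuine difficulty, and may ultimately require a structure theorem, not currently available, describing the shape of near-extremal $\Phi_n$. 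Should super-squaring turn out to fail, one can retreat to a two-sided squaring relation $B_\omega^2/L_\omega \le B_{\omega+1} \le B_\omega^2 L_\omega$ with $\log L_\omega = o(2^\omega)$, which still suffices to make $\{\beta_\omega\}$ Cauchy and hence convergent.
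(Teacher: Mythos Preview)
The statement you are attempting to prove is not a theorem in the paper; it is explicitly labelled a \emph{Conjecture} and is presented as an open problem (``we expect that the following natural conjecture is true''). There is therefore no proof in the paper to compare your proposal against.

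Your proposal itself is not a proof but a programme, and you say so yourself. The entire argument hinges on the ``super-squaring inequality'' $B_{\omega+1}\ge B_\omega^2$, which you do not establish: the key coefficient bound $A_{n'}\ge A_n^2\prod_{i=1}^{\omega-1}p_i\,(1-o(1))$ is stated as a goal, and you concede that proving it for an \emph{arbitrary} near-extremal $n$ ``may ultimately require a structure theorem, not currently available.'' The $L^2$ averaging idea is plausible heuristics, but you give no indication of how to avoid losing a constant factor at each step, and the fallback two-sided relation $B_\omega^2/L_\omega\le B_{\omega+1}\le B_\omega^2 L_\omega$ with $\log L_\omega=o(2^\omega)$ is likewise asserted rather than proved. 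There is also a minor misattribution: the existence of $n$ with $A_n\ge (B_\omega-\eta)M_n$ follows from the definition of $B_\omega$ as a $\limsup$, not from Theorem~\ref{T large An}, which only yields the weaker constant $c_\omega$; moreover such near-extremal $n$ need not be $h$-growing, so the later appeal to ``$p$ much larger than $h(n)$'' has no force.

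In short, the conjecture remains open, and your write-up is a reasonable outline of one possible attack, but it contains no new argument that closes the gap.
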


\section{Preliminaries and binary case}

Let us define the value
$$L_n = \max_{|z|=1}|\Phi_n(z)|.$$
It was already considered by several authors \cite{BatemanPomeranceVaughan-Size,KonyaginMaierWirsing-ManyPrimes,Maier-Anatomy} while estimating $A_n$. If $S_n$ denotes the sum of absolute values of the coefficients of $\Phi_n$, then for $n>1$
$$A_n \ge \frac{S_n}{\deg\Phi_n+1} \ge \frac{L_n}n.$$

We express $|\Phi_n(z)|$ as a real function of $x=\arg(z)$ for $|z|=1$. For all $n\ge1$ let
$$F_n(x) = \prod_{d\mid n}\left(\sin\frac d2x\right)^{\mu(n/d)},$$
where we put $\frac{\sin ax}{\sin bx} = \frac ab$ for $\sin bx=\sin ax = 0$. Note that $F_n$ is periodic with the period $2\pi$. By the following lemma $F_n(x)$ is well defined for all $x\in\mathbb{R}$.

\begin{lemma} \label{L |z|=1}
For $n>1$ we have $|\Phi_n(e^{ix})|=|F_n(x)|$.
\end{lemma}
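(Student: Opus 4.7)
The plan is to derive the identity from Möbius inversion and then extend it across the singular points by continuity.

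Möbius-inverting the factorization $z^n-1 = \prod_{d\mid n}\Phi_d(z)$ gives the formal identity
$$\Phi_n(z) = \prod_{d\mid n}(z^d-1)^{\mu(n/d)}$$
in $\mathbb{C}(z)$. Substituting $z = e^{ix}$ and using $e^{idx}-1 = 2i\,e^{idx/2}\sin(dx/2)$, this becomes
$$\Phi_n(e^{ix}) = 2^{\sum_{d\mid n}\mu(n/d)}\, i^{\sum_{d\mid n}\mu(n/d)}\, e^{ix\sum_{d\mid n}d\mu(n/d)/2}\prod_{d\mid n}\bigl(\sin(dx/2)\bigr)^{\mu(n/d)}.$$
For $n>1$ one has $\sum_{d\mid n}\mu(n/d)=0$, so the first two scalar factors equal $1$, while the third factor has modulus $1$ since its exponent is purely imaginary. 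Taking absolute values yields $|\Phi_n(e^{ix})|=|F_n(x)|$ at every $x$ for which no factor $\sin(dx/2)$ with $\mu(n/d)\ne 0$ vanishes -- a dense open subset of $\mathbb{R}$.

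To extend across the remaining points $x_0=2\pi k/m$ with $m\mid n$, I would observe that the divisors $d$ producing $\sin(dx_0/2)=0$ are precisely those of the form $d=me$ with $e\mid n/m$, and locally $\sin(mex/2)=\pm\sin(me(x-x_0)/2)$. Collecting these factors in $F_n$, the exponent of $(x-x_0)$ in their product is $\sum_{e\mid n/m}\mu((n/m)/e)$, which equals $0$ when $n/m>1$ and $1$ when $m=n$. In the first case the singularity is removable and the limiting value is $\prod_{e\mid n/m}(me/2)^{\mu((n/m)/e)}$, which is exactly what the convention $\sin(ax)/\sin(bx)=a/b$ assigns to the grouped factors at $x_0$; in the second case both $F_n(x_0)$ and $\Phi_n(e^{ix_0})$ vanish, since $e^{ix_0}$ is then a primitive $n$th root of unity. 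The identity therefore extends by continuity to all $x\in\mathbb{R}$. The only delicate step is this pairing of Möbius-weighted sines at singular $x_0$; once the right groupings are made, the extension is a routine local expansion.
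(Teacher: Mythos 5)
Your proof is correct and follows the same route as the paper: it applies the M\"obius formula $\Phi_n(z)=\prod_{d\mid n}(1-z^d)^{\mu(n/d)}$ together with $|1-e^{i\theta}|=2|\sin(\theta/2)|$ and $\sum_{d\mid n}\mu(n/d)=0$ for $n>1$, then extends across the removable singularities. You work out the local order of vanishing explicitly, whereas the paper simply appeals to continuity and boundedness of $\Phi_n(e^{ix})$ to fill the isolated singular points by a limit, but the underlying argument is the same.
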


\begin{proof}
By elementary computations $|1-z| = 2\left|\sin\frac12x\right|$. Then we use the well known Moebius formula $\Phi_n(z)=\prod_{d\mid n}(1-z^d)^{\mu(n/d)}$. Note that $\Phi_n(e^{ix})$ is a bounded continous function of $x$, so if the product $F_n(x_0)$ is not defined for some $x_0$ (which happens only for finitely many values of $0\le x_0<2\pi$), then we can replace it by its limit with $x\to x_0$.
\end{proof}

By Lemma \ref{L |z|=1} we have
$$L_n = \max_{|z|=1}|\Phi_n(z)| = \max_{0\le x < 2\pi}|F_n(x)|$$
as long as $n>1$. Furthermore $|F_1(x)|=\frac12|\Phi_1(e^{ix})|$.

It is easy to determine $L_1=1$ and $L_{p_1}=p_1$. Let us consider the case $\omega=2$.

\begin{theorem} \label{T binary}
Let $p_1<p_2$ be primes and let $a$ be the unique integer such that $p_1\mid p_2+2a$ and $|a|<p_1/2$. Then $L_{p_1p_2}\ge\frac{4(p_1-2)p_2}{\pi^2|2a+1|}$.
\end{theorem}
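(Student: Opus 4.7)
The plan is to evaluate $|F_{p_1 p_2}|$ (defined before the theorem, with $|F_{p_1p_2}(x)| = |\Phi_{p_1p_2}(e^{ix})|$ by Lemma~\ref{L |z|=1}) at a single point $x$ that is a small perturbation of a $p_1$-th root of unity. The perturbation will be chosen so that $|\sin(p_1 p_2 x/2)| = 1$ and the two denominator factors $|\sin(p_1 x/2)|$, $|\sin(p_2 x/2)|$ collapse exactly to $\sin(\pi/(2p_2))$ and $\sin(\pi/(2p_1))$ respectively---the smallest values compatible with the Diophantine relation $p_2 \equiv -2a \pmod{p_1}$. The factor $4/\pi^2$ appearing in the target bound will then come from applying $\sin y \le y$ to the two denominator sines, and $\sin y \ge 2y/\pi$ to $|\sin(x/2)|$.

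Concretely, since $\gcd(2a, p_1) = 1$, I first pick the unique $k \in \{1, 2, \dots, (p_1-1)/2\}$ with $2ak \equiv \pm 1 \pmod{p_1}$, and let $\sigma \in \{+1,-1\}$ be defined by $kp_2 \equiv \sigma \pmod{p_1}$ (equivalently $2ak \equiv -\sigma \pmod{p_1}$, using $p_2 \equiv -2a$). Setting $x = 2\pi k/p_1 - \sigma\pi/(p_1 p_2)$, direct computation yields $|\sin(p_1 x/2)| = \sin(\pi/(2p_2))$ from $p_1 x/2 = \pi k - \sigma\pi/(2p_2)$; writing $kp_2 = \sigma + p_1 M$ one gets $p_2 x/2 = \pi M + \sigma\pi/(2p_1)$ and hence $|\sin(p_2 x/2)| = \sin(\pi/(2p_1))$; and $p_1 p_2 x/2 = \pi k p_2 - \sigma\pi/2$ gives $|\sin(p_1 p_2 x/2)| = 1$. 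Moreover $x/2 \in (0,\pi/2)$ because $1 \le k \le (p_1-1)/2$, so $\sin y \ge 2y/\pi$ on that interval yields $|\sin(x/2)| \ge \tfrac{2k}{p_1} - \tfrac{\sigma}{p_1 p_2}$. Combining these estimates with $\sin y \le y$ on the denominator,
\[ |F_{p_1p_2}(x)| \ge \Bigl(\tfrac{2k}{p_1} - \tfrac{\sigma}{p_1p_2}\Bigr) \cdot \tfrac{2p_1}{\pi} \cdot \tfrac{2p_2}{\pi} \ge \tfrac{4(2kp_2 - 1)}{\pi^2}. \]

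It then remains to prove $2kp_2 - 1 \ge (p_1-2)p_2/|2a+1|$. From the defining congruence $2ak \equiv \pm 1 \pmod{p_1}$ the integer $2|a|k \mp 1$ is a nonzero multiple of $p_1$, so $2|a|k \ge p_1-1$, i.e.\ $k \ge (p_1-1)/(2|a|)$. After substituting this bound, the remaining inequality splits into the cases $a \ge 1$ (where $|2a+1| = 2|a|+1$) and $a \le -1$ (where $|2a+1| = 2|a|-1$), and in each case reduces, by clearing denominators, to an elementary estimate of the form $p_2 \cdot p_1 \cdot (\text{positive linear in }|a|) \ge |a|(2|a|\pm 1)$ that holds comfortably since $p_1 > 2|a|$ and $p_2 > p_1$. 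The main obstacle I anticipate is simply the careful sign bookkeeping (the value of $|2a+1|$ depends on whether $a$ is positive or negative), together with the tight edge case $a = -1$ where $|2a+1|=1$ and the slack between the lower bound $4(2kp_2-1)/\pi^2$ and the theorem's target is smallest.
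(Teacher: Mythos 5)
Your proof is correct, and it takes a genuinely different route from the paper's, though both fit the same strategy of evaluating $|F_{p_1p_2}|$ at a perturbed $p_1$th root of unity. The paper sets $x=\bigl(1+\tfrac1{p_1}+\tfrac{2a+1}{p_1p_2}\bigr)\pi$; this keeps $|\sin(x/2)|\ge 1-\tfrac2{p_1}$ close to $1$ and makes $|\sin(p_1x/2)|\le\tfrac{|2a+1|\pi}{2p_2}$, $|\sin(p_2x/2)|\le\tfrac{\pi}{2p_1}$, so the $|2a+1|$ appears directly in the denominator and the stated bound falls out in two lines with no number theory beyond the definition of $a$. You instead set $x=\tfrac{2\pi k}{p_1}-\tfrac{\sigma\pi}{p_1p_2}$ with $k$ a half-inverse of $2a$ modulo $p_1$, which is a different point in general (for $p_1=7$, $p_2=11$ the paper's $x$ is $85\pi/77$ and yours is $43\pi/77$, not reflections of each other); this collapses both denominator sines to their absolute minima $\sin(\tfrac{\pi}{2p_2})$ and $\sin(\tfrac{\pi}{2p_1})$, at the cost that $|\sin(x/2)|$ is only of size $\approx 2k/p_1$, and the $|2a+1|$ must be recovered from the Diophantine lower bound $k\ge\tfrac{p_1-1}{2|a|}$ plus a sign case split. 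That extra arithmetic makes your proof longer, but your intermediate bound $\tfrac{4(2kp_2-1)}{\pi^2}$ is sometimes sharper than the paper's before being relaxed to the theorem's form. I checked the final reduction you sketched: substituting $k\ge\tfrac{p_1-1}{2|a|}$, the inequality $2kp_2-1\ge\tfrac{(p_1-2)p_2}{|2a+1|}$ becomes, after clearing denominators, $p_2\bigl[(a+1)p_1-1\bigr]\ge a(2a+1)$ for $a\ge1$ and $p_2\bigl[(|a|-1)p_1+1\bigr]\ge|a|(2|a|-1)$ for $a\le-1$; both hold because $p_1\ge 2|a|+1$ and $p_2\ge1$, with the $a=-1$ case reducing to $p_2\ge1$ exactly as you flagged.
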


\begin{proof}
Put $x=\left(1+\frac1{p_1}+\frac{2a+1}{p_1p_2}\right)\pi$. Then
\begin{align*}
\left|\sin \frac{p_1p_2x}2\right| & = \left|\sin\frac{p_1p_2+p_2+2a+1}2\pi\right| = 1,\\
\left|\sin \frac x2\right| & = \left|\cos\left(\frac{1}{2p_1}+\frac{2a+1}{2p_1p_2}\right)\pi\right| \ge 1-\frac1p_1-\frac{|2a+1|}{p_1p_2} \ge 1-\frac2{p_1},
\end{align*}
where we used the inequality $\cos t \ge 1-\frac2\pi\cdot |t|$ for $|t| \le \pi/2$. Furthermore
\begin{align*}
\left|\sin \frac{p_1x}2\right| & = \left|\sin\left(\frac{p_1+1}2+\frac{2a+1}{2p_2}\right)\pi\right| = \left|\sin\frac{2a+1}{2p_2}\pi\right| \le \frac{|2a+1|\pi}{2p_2},\\
\left|\sin \frac{p_2x}2\right| & = \left|\sin\left(\frac{p_2}2+\frac{p_2+2a}{2p_1}+\frac1{2p_1}\right)\pi\right| = \left|\sin\frac\pi{2p_1}\right| \le \frac\pi{2p_1},
\end{align*}
where we used the inequality $|\sin t| \le |t|$ for $t\in\mathbb{R}$. By the above inequalities we obtain
$$L_{p_1p_2} \ge F_{p_1p_2}(x) = \left|\frac{\sin(x/2)\sin(p_1p_2x/2)}{\sin(p_1x/2)\sin(p_2x/2)}\right| \ge \frac{4(p_1-2)p_2}{\pi^2|2a+1|},$$
as desired.
\end{proof}

\section{Derivative of $F_n$}

It is not difficult to prove that $F_n$ is a differentiable function. Let $f_n(x)$ be the derivative of $F_n(x)$. The function $f_n$ plays a crucial role in our construction of $n$ with large $L_n$, especially its minimal absolute values in points $x_0$ for which $F_n(x_0)=0$. Let
$$D_n = \min_{x_0: \; F_n(x_0)=0}|f_n(x_0)|.$$
The aim of this section is to prove the following theorem.

\begin{theorem} \label{T Dn formula}
For all positive integers $\omega$ and all $\varepsilon>0$ there exists a function $h_{\omega,\varepsilon}:\mathbb{R}_+\to\mathbb{R}_+$ depending only on $\omega$ and $\varepsilon$, such that
$$\frac n2 \cdot (L_{p_1}L_{p_1p_2}\ldots L_{p_1p_2\ldots p_{\omega-1}})^{-1} \le D_n < (1+\varepsilon)\frac n2 \cdot (L_{p_1}L_{p_1p_2}\ldots L_{p_1p_2\ldots p_{\omega-1}})^{-1}$$
for all $h_{\omega,\varepsilon}$-growing $n=p_1p_2\ldots p_\omega$.
\end{theorem}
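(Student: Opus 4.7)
My plan is to induct on $\omega$, leveraging the multiplicative identity
$$F_n(x) = \frac{F_m(p\,x)}{F_m(x)}, \qquad m := p_1\cdots p_{\omega-1},\ p := p_\omega,$$
which follows from splitting the divisors of $n$ by divisibility by $p$ and invoking $\mu(pk)=-\mu(k)$ for $\gcd(p,k)=1$. A useful preliminary observation: since $n$ is squarefree, summing $\mu(n/d)$ over those $d\mid n$ with $\sin(dx_0/2)=0$ shows that the zeros of $F_n$ in $[0,2\pi)$ are exactly the simple zeros at $x_0 = 2\pi J/n$ with $\gcd(J,n)=1$. At each such zero, $p x_0$ is a simple zero of $F_m$ and $F_m(x_0)\neq 0$, so differentiating the recursion produces the exact formula
$$|f_n(x_0)| = \frac{p\,|f_m(p x_0)|}{|F_m(x_0)|}.$$

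The base case $\omega=1$ is direct: $F_{p_1}(x)=\sin(p_1 x/2)/\sin(x/2)$ gives $D_{p_1} = p_1/(2\cos(\pi/(2p_1)))$, which lies in $[p_1/2,(1+\varepsilon)p_1/2)$ once $p_1$ exceeds a threshold $h_{1,\varepsilon}(1)$. For the inductive step I fix $\varepsilon'>0$ with $(1+\varepsilon')^2\le 1+\varepsilon$ and assume the theorem at level $\omega-1$ with tolerance $\varepsilon'$, taking $h_{\omega,\varepsilon}$ to dominate $h_{\omega-1,\varepsilon'}$ on all relevant arguments. The lower bound then drops out of the displayed formula: $|f_n(x_0)|\ge p D_m/L_m$ since $|f_m(p x_0)|\ge D_m$ and $|F_m(x_0)|\le L_m$, and telescoping with the inductive lower bound on $D_m$ gives
$$D_n \ge \frac{n}{2L_{p_1}L_{p_1p_2}\cdots L_{p_1\cdots p_{\omega-1}}}.$$

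For the matching upper bound I select a zero $y^*$ of $F_m$ realizing $|f_m(y^*)|=D_m$ and a point $x_m^*$ at which $|F_m|$ attains $L_m$. The $p$ preimages $(y^*+2\pi k)/p$ of $y^*$ under $x\mapsto p x\bmod 2\pi$ include exactly one forbidden $k\in\{0,\ldots,p-1\}$ for which $x_0$ itself is a zero of $F_m$; the remaining $p-1$ are zeros of $F_n$, approximately equispaced with gap $2\pi/p$, so one of them satisfies $|x_0-x_m^*|\le 2\pi/p$. Because $x_m^*$ is a smooth interior maximum of $|F_m|$ (as $F_m$ is smooth and nonzero there), a second-order Taylor expansion yields $|F_m(x_0)|\ge L_m(1-C_m/p^2)$ for some $C_m$ depending only on $p_1,\ldots,p_{\omega-1}$. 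Substituting into the displayed formula gives
$$D_n\le|f_n(x_0)| \le \frac{pD_m}{L_m}\bigl(1+O_m(p^{-2})\bigr),$$
and combining with the inductive upper bound $D_m\le(1+\varepsilon')m/(2L_{p_1}\cdots L_{p_1\cdots p_{\omega-2}})$ while requiring $p=p_\omega$ large enough (absorbed into the threshold $h_{\omega,\varepsilon}(p_1\cdots p_{\omega-1})$) closes the induction with slack $(1+\varepsilon')^2\le 1+\varepsilon$. The main technical concern I anticipate is bookkeeping the Taylor constant $C_m$ through the induction, since it depends on the second derivative of $|F_m|$ at its argmax and hence on $p_1,\ldots,p_{\omega-1}$; however, because these primes are chosen before $p_\omega$ in any $h$-growing sequence, that dependence is legitimately swallowed into $h_{\omega,\varepsilon}$. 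A minor subsidiary point, easy to dispose of, is that deleting the single forbidden $k$ still leaves $p-1$ candidates whose maximal gap to any target in $[0,2\pi)$ is bounded by $2\pi/p$.
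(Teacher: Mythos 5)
Your proposal is correct and follows essentially the same path as the paper's proof: the same recursion $F_n(x)=F_m(px)/F_m(x)$, the same exact formula $|f_n(x_0)|=p|f_m(px_0)|/|F_m(x_0)|$ at zeros (the paper routes this through its Lemmas~\ref{L f} and~\ref{L fnp}, you differentiate the recursion directly), and the same telescoping of the resulting two-sided bound $pD_m/L_m \le D_n \lesssim pD_m/L_m$, with the tolerance distributed across the levels. The only substantive difference is in the upper bound, where you make explicit, via equispacing of the $p$ preimages and a second-order Taylor expansion at the argmax of $|F_m|$, the step that the paper dispatches with a continuity remark (``by choosing an appropriate $t$ we can have $|F_n(\cdot)|$ as close to $L_n$ as we wish when $p\to\infty$''); your slack $(1+\varepsilon')^2\le 1+\varepsilon$ per step and base case at $\omega=1$ instead of the paper's $D_1=\tfrac12$ are cosmetic.
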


In order to prove this theorem we will need some lemmas.

\begin{lemma} \label{L f}
If $F_n(x_0)=0$, then
$$|f_n(x_0)| = \frac n2 \prod_{d\mid n, \; d\neq n}\left|\sin\frac d2x_0\right|^{\mu(n/d)}.$$
\end{lemma}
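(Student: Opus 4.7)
The natural approach is to isolate the $d=n$ factor in the product defining $F_n$ and differentiate by the product rule. Write
\[
F_n(x) \;=\; \sin\tfrac{nx}{2}\cdot G_n(x),\qquad G_n(x)\;=\;\prod_{d\mid n,\;d\neq n}\Bigl(\sin\tfrac{dx}{2}\Bigr)^{\mu(n/d)},
\]
with the same convention $\frac{\sin ax}{\sin bx}=a/b$ at indeterminate points. Differentiation yields
\[
f_n(x)\;=\;\tfrac{n}{2}\cos\tfrac{nx}{2}\cdot G_n(x)\;+\;\sin\tfrac{nx}{2}\cdot G_n'(x).
\]

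The key observation is that every zero $x_0$ of $F_n$ automatically satisfies $\sin(nx_0/2)=0$. Indeed, $F_n(x_0)=0$ forces at least one factor $\sin(dx_0/2)$ with $d\mid n$ to vanish; writing $dx_0/2=k\pi$ and using $d\mid n$ gives $nx_0/2=(n/d)k\pi$, a multiple of $\pi$. Hence $\cos(nx_0/2)=\pm 1$, the second summand in the product rule dies, and
\[
|f_n(x_0)|\;=\;\tfrac{n}{2}\,|G_n(x_0)|\;=\;\tfrac{n}{2}\prod_{d\mid n,\;d\neq n}\Bigl|\sin\tfrac{dx_0}{2}\Bigr|^{\mu(n/d)},
\]
which is exactly the claimed identity.

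The only real piece of bookkeeping is to justify the factorization and the value $G_n(x_0)$ when several sines vanish simultaneously at $x_0$, so that $G_n$ itself is defined only via the limit convention. I would handle this by replacing each vanishing factor $\sin(dx/2)$ by its first-order approximation $\tfrac{d}{2}\cos(dx_0/2)(x-x_0)$ and tracking orders: if $S=\{d\mid n:\sin(dx_0/2)=0\}$ and $\sigma=\sum_{d\in S}\mu(n/d)$, then boundedness of $F_n$ forces $\sigma\ge 0$, while $F_n(x_0)=0$ forces $\sigma\ge 1$. Consequently $G_n$ has local order $\sigma-1\ge 0$ at $x_0$, so $G_n$ extends analytically through $x_0$, the product-rule identity is valid as a limit, and the formula above holds (both sides being zero in the non-generic case $\sigma\ge 2$, and the generic case $\sigma=1$ giving the nontrivial content). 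This convention-handling is the only obstacle; once past it, the proof is essentially one application of the product rule.
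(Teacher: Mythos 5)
Your proof is correct and rests on the same core idea as the paper's: isolate the $d=n$ factor near $x_0$, observe that it vanishes linearly with slope $\pm\frac n2$, and note that the remaining product converges to the claimed value; the paper packages this as a direct evaluation of the limit quotient $\lim_{\epsilon\to 0}\epsilon^{-1}F_n(x_0+\epsilon)$ rather than via the product rule, but the computation is the same. Your order-counting argument with $\sigma=\sum_{d\in S}\mu(n/d)$ is sound, but it is heavier than needed: the paper simply notes that $F_n(x_0)=0$ forces $x_0=\frac{2t_0\pi}{n}$ with $\gcd(t_0,n)=1$ (the zeros of $F_n$ being exactly the primitive $n$th roots of unity), whence $\sin\frac d2 x_0\ne 0$ for every proper divisor $d$ of $n$ and $\sigma$ is automatically $1$, so $G_n$ is ordinary (nonzero and smooth) at $x_0$ and no degenerate case ever arises.
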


\begin{proof}
Since $x_0=\frac{2t_0\pi}{n}$ with some integer $t_0$ coprime to $n$, we have
\begin{align*}
f_n(x_0) & = \lim_{\epsilon\to\infty}\frac1\epsilon\prod_{d\mid n}\left(\sin\frac d2(x_0+\epsilon)\right)^{\mu(n/d)} \\
& = \lim_{\epsilon\to\infty}\frac{\sin(t_0\pi+n\epsilon/2)}{\epsilon}\prod_{d\mid n, \; d\neq n}\left(\sin\frac d2(x_0+\epsilon)\right)^{\mu(n/d)} \\
& = \pm\frac n2\prod_{d\mid n, \; d\neq n}\left(\sin\frac d2x_0\right)^{\mu(n/d)},
\end{align*}
as desired.
\end{proof}

\begin{lemma} \label{L fnp}
Let $p$ be a prime not dividing $n$. If $F_{np}(x_1)=0$, then $f_{np}(x_1) = \frac{p|f_n(x_1p)|}{|F_n(x_1)|}$.
\end{lemma}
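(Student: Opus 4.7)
The plan is to reduce the lemma to a short computation based on the multiplicative identity
\[
F_{np}(x) \;=\; \frac{F_n(px)}{F_n(x)},
\]
which I would establish first. To see this, note that since $p\nmid n$, every divisor of $np$ is uniquely either a divisor $d$ of $n$ or of the form $pd$ with $d\mid n$, and in the latter case $\mu(np/(pd))=\mu(n/d)$, while in the former case $\mu(np/d)=\mu(p\cdot n/d)=-\mu(n/d)$. Splitting the defining product for $F_{np}$ according to this dichotomy then gives
\[
F_{np}(x)=\prod_{d\mid n}\left(\sin\tfrac{d}{2}x\right)^{-\mu(n/d)}\prod_{d\mid n}\left(\sin\tfrac{pd}{2}x\right)^{\mu(n/d)}=\frac{F_n(px)}{F_n(x)},
\]
with the usual convention (as in Lemma \ref{L |z|=1}) that one takes limits at the finitely many points where individual factors are undefined.

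Next I would differentiate both sides. Away from the exceptional points the quotient rule yields
\[
f_{np}(x) \;=\; \frac{p\,f_n(px)\,F_n(x)-F_n(px)\,f_n(x)}{F_n(x)^2}.
\]
Now suppose $F_{np}(x_1)=0$. Since $F_{np}(x_1)=F_n(px_1)/F_n(x_1)$, and since the zeros of $F_m$ are precisely at $x=2t\pi/m$ with $\gcd(t,m)=1$, we must have $F_n(px_1)=0$ while $F_n(x_1)\ne 0$ (as otherwise $x_1$ would be a point where both numerator and denominator vanish, i.e.\ a removable singularity, not a zero of $F_{np}$). Substituting $x=x_1$ into the derivative formula, the second term in the numerator disappears and one is left with
\[
f_{np}(x_1) \;=\; \frac{p\,f_n(px_1)}{F_n(x_1)},
\]
and passing to absolute values gives the claimed identity.

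The only delicate point is justifying that the differentiation is legitimate at $x_1$; $x_1$ is a zero of $F_n(px)$ but \emph{not} of $F_n(x)$, so in a neighbourhood of $x_1$ the identity $F_{np}=F_n(px)/F_n(x)$ is a genuine quotient of smooth functions and the computation above is straightforward. I expect this to be the only subtle step, and it is handled exactly as in the proof of Lemma \ref{L |z|=1}.
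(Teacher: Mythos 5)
Your proof is correct, and it takes a genuinely different route from the paper's. The paper applies Lemma~\ref{L f} (the explicit product formula for $|f_n|$ at a zero) directly to $|f_{np}(x_1)|$, then rearranges the Moebius product over divisors of $np$ into the piece over $d\mid n$ (giving $|F_n(x_1)|^{-1}$) and the piece over $dp$ with $d\mid n$, $d\neq n$ (giving $\tfrac2n|f_n(px_1)|$ by another application of Lemma~\ref{L f}). You instead promote the divisor-splitting to the functional identity $F_{np}(x)=F_n(px)/F_n(x)$, check that a zero $x_1$ of $F_{np}$ forces $F_n(px_1)=0$ and $F_n(x_1)\neq0$ (the latter because $\gcd(t_1,np)=1$ rules out $p\mid t_1$), and then differentiate with the quotient rule, after which the term containing $F_n(px_1)$ drops out. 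Both arguments encode the same combinatorics of divisors of $np$; yours does it once at the level of functions and then uses plain calculus, avoiding Lemma~\ref{L f} entirely and making the non-vanishing of $F_n(x_1)$ explicit, whereas the paper's proof is a purely algebraic manipulation once Lemma~\ref{L f} is available. (Both proofs, like the statement itself, should really read $|f_{np}(x_1)|$ on the left, which is what you in fact establish by ``passing to absolute values.'')
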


\begin{proof}
By Lemma \ref{L f}
\begin{align*}
|f_{np}(x_1)| & = \frac{np}2 \prod_{d\mid np, \; d\neq np}\left|\sin\frac d2x_1\right|^{\mu(np/d)} \\
& = \frac{np}2\cdot\left(\prod_{d\mid n}\left|\sin\frac d2x_1\right|^{\mu(n/d)}\right)^{-1}\cdot\left(\prod_{d\mid n, \; d\neq n}\left|\sin\frac{dp}2x_1\right|^{\mu(n/d)}\right) \\
& = \frac{np}2\cdot|F_n(x_1)|^{-1}\cdot\frac2n\cdot|f_n(px_1)| = \frac{p|f_n(px_1)|}{|F_n(x_1)|},
\end{align*}
which completes the proof.
\end{proof}

\begin{lemma} \label{L Dnp}
We have $D_{np} \ge p\cdot\frac{D_n}{L_n}$. Moreover, for all $\varepsilon>0$ there exists a function $h_\varepsilon:\mathbb{R}_+\to\mathbb{R}_+$ depending only on $\varepsilon$, such that $D_{np} < (1+\varepsilon)\cdot p\cdot\frac{D_n}{L_n}$ for all $p>h_\varepsilon(n)$.
\end{lemma}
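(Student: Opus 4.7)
The plan is to apply Lemma \ref{L fnp}, which at every zero $x_1$ of $F_{np}$ gives the exact identity $|f_{np}(x_1)|=p|f_n(px_1)|/|F_n(x_1)|$, and to analyse the numerator and denominator separately. For the lower bound, observe that the zeros of $F_{np}$ have the form $x_1=2\pi s/(np)$ with $\gcd(s,np)=1$, so $px_1=2\pi s/n$ is automatically a zero of $F_n$ (since $\gcd(s,n)=1$). Hence $|f_n(px_1)|\ge D_n$ and $|F_n(x_1)|\le L_n$, which together give $|f_{np}(x_1)|\ge pD_n/L_n$; minimising over $x_1$ yields $D_{np}\ge pD_n/L_n$.

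For the upper bound I would construct an $x_1$ at which both factors are nearly optimal. Fix $y_0=2\pi t_0/n$ with $\gcd(t_0,n)=1$ realising $|f_n(y_0)|=D_n$, and $x^*\in[0,2\pi)$ with $|F_n(x^*)|=L_n$. Restricting to $s\equiv t_0\pmod n$ forces $px_1\equiv y_0\pmod{2\pi}$, and by $2\pi$-periodicity $|f_n(px_1)|=D_n$ exactly. The $p$ values $s\in\{t_0+kn:0\le k<p\}$ are equispaced with gap $n$ inside $[0,np)$, so at least one approximates $npx^*/(2\pi)$ to within $n/2$ modulo $np$, equivalently $|x_1-x^*|\le\pi/p$ modulo $2\pi$. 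At most one $s$ in this progression is divisible by $p$; discarding it at worst triples the error while preserving $\gcd(s,np)=1$.

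Since $F_n$ is a finite product of sines, its derivative is bounded by an explicit constant $C(n)$, so continuity gives $|F_n(x_1)|\ge L_n-3\pi C(n)/p$. Substituting into the identity of Lemma \ref{L fnp},
$$|f_{np}(x_1)|=\frac{pD_n}{|F_n(x_1)|}\le\frac{pD_n}{L_n-3\pi C(n)/p},$$
and this is below $(1+\varepsilon)pD_n/L_n$ as soon as $p$ exceeds a threshold $h_\varepsilon(n)$ depending only on $n$ and $\varepsilon$; an explicit choice is $h_\varepsilon(n)=3\pi C(n)(1+\varepsilon)/(\varepsilon L_n)$.

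The main point of care is the simultaneous control of the two factors in Lemma \ref{L fnp}: the trick that makes it work is to use the congruence $s\equiv t_0\pmod n$ to pin $|f_n(px_1)|$ to $D_n$ exactly, which leaves one free parameter of density $p$ inside $[0,np)$ to approach $x^*$ as closely as we like. Beyond that, the coprimality of $s$ with $p$ and the continuity estimate for $F_n$ are routine.
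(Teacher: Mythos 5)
Your proof is correct and follows essentially the same route as the paper: both bounds come from the identity of Lemma \ref{L fnp}, the lower bound by observing that $px_1$ is automatically a zero of $F_n$, and the upper bound by parameterizing the zeros of $F_{np}$ as $\frac{x_0+2t\pi}{p}$ and choosing $t$ (your $k$) so that the point lands near a maximizer of $|F_n|$. The only difference is that the paper leaves the final step (``close to $L_n$ as we wish'') qualitative, whereas you make the continuity estimate and the threshold $h_\varepsilon(n)$ explicit; one small slip is calling $F_n$ a ``finite product of sines'' when it is a quotient, but since $F_n$ is $C^1$ and $2\pi$-periodic the bound $C(n)$ on its derivative still exists.
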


\begin{proof}
Let $x_0$ and $x_1$ be such that $F_n(x_0)=F_{np}(x_1)=0$, $|f_n(x_0)|=D_n$ and $|f_{np}(x_1)|=D_{np}$. Since $x_1=\frac{2t_1\pi}{np}$ for some $t_1$ coprime to $np$, we have $px_1=\frac{2t_1\pi}{n}$. Therefore $F_n(px_1)=0$ and hence $|f_{p}(px_1)|\ge D_n$. By applying this inequality and Lemma \ref{L fnp} we obtain
$$D_{np}=|f_{np}(x_1)|=\frac{p|f_n(px_1)|}{|F_n(x_1)|} \ge p\cdot\frac{D_n}{L_n}.$$

For obtaining the opposite inequality, let $x_0=\frac{2t_0\pi}n$ and $x_1'=\frac{x_0+2t\pi}p = \frac{2(t_0+tn)\pi}{np}$ with any $t\not\equiv-\frac{t_0}n\pmod p$. Then $F_{np}(x_1')=0$ and $f_n(px_1')=D_n$. Again by Lemma \ref{L fnp}
$$D_{np} \le |f_{np}(x_1')| = \frac{p|f_n(px_1')|}{|F_n(x_1')|} = p\cdot\frac{D_n}{\left|F_n\left(\frac{x_0+2t\pi}p\right)\right|}.$$
By choosing an appropriate $t$ we can have $\left|F_n\left(\frac{x_0+2t\pi}p\right)\right|$ as close to $L_n$ as we wish when $p\to\infty$.
\end{proof}

Now we are ready to prove the main theorem of this section.

\begin{proof}[Proof of Theorem \ref{T Dn formula}]
Let $\varepsilon>0$ be fixed and let $\varepsilon'=\sqrt[\omega]{1+\varepsilon}-1$. Let $h_{\varepsilon'}$ be a function given by Lemma \ref{L Dnp}, which implies that if $n=p_1p_2\ldots p_\omega$ is $h_{\varepsilon'}$-growing, then
$$p_i\cdot\frac{D_{p_1p_2\ldots p_{i-1}}}{L_{p_1p_2\ldots p_{i-1}}} \le D_{p_1p_2\ldots p_i} < (1+\varepsilon')p_i\cdot\frac{D_{p_1p_2\ldots p_{i-1}}}{L_{p_1p_2\ldots p_{i-1}}}$$
for $i=1,2\ldots,\omega$ (empty product equals $1$). By these inequalities
$$\frac{nD_1}{L_1L_{p_1}L_{p_1p_2}\ldots L_{p_1p_2\ldots p_{\omega-1}}} \le D_n < (1+\varepsilon')^\omega\frac{nD_1}{L_1L_{p_1}L_{p_1p_2}\ldots L_{p_1p_2\ldots p_{\omega-1}}}.$$
Note that $(1+\varepsilon')^\omega=1+\varepsilon$, $L_1=1$ and $D_1=\frac12$. So the theorem holds with the function $h_{\omega,\varepsilon} = h_{\varepsilon'} = h_{\sqrt[\omega]{1+\varepsilon}-1}$, which clearly depends only on $\omega$ and $\varepsilon$.
\end{proof}

\section{Proof of main result}

In the following lemma we give a lower bound on $L_{np}$ which depends on the residue class of $p$ modulo $n$.

\begin{lemma} \label{L Lnp}
Let $\varepsilon>0$ and $n=p_1p_2\ldots p_\omega$ be fixed. Put $x_M\in[0,2\pi)$ such that $F_n(x_M)=L_n$ and $x_0=\frac{2t_0\pi}{n}$ for which $F_n(x_0)=0$ and $|f_n(x_0)|=D_n$. Let $b=\min_{k\in\mathbb{Z}}\left|\frac{nx_M}{2\pi}-pt_0+nk\right|$. Then
$$L_{np}>(1-\varepsilon)L_n\cdot\frac{np}{2b\pi D_n}$$
for every $p$ large enough. Furthermore, if $p_1>\omega$ and $r$ is an integer coprime to $n$ such that $\left|\frac{nx_M}{2\pi}-r\right|$ is smallest possible, then
$$L_{np}>(1-\varepsilon)L_n\cdot\frac{1}{\pi(\omega+1)}\cdot\frac{np}{D_n}$$
for every sufficiently large $p\equiv \frac r{t_0} \pmod n$.
\end{lemma}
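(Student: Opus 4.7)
The plan hinges on the identity $F_{np}(x)=F_n(px)/F_n(x)$, valid whenever $\gcd(p,n)=1$. This follows directly from $F_m(x)=\prod_{d\mid m}(\sin(dx/2))^{\mu(m/d)}$ by partitioning the divisors of $np$ into those dividing $n$ and their multiples by $p$, using $\mu(pm)=-\mu(m)$ for $p\nmid m$.

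For the first inequality, I would evaluate $|F_{np}|$ at the point $x=x_0+\delta$, where $\delta$ is chosen so that $p(x_0+\delta)\equiv x_M\pmod{2\pi}$. Writing $px_0=2pt_0\pi/n$, the smallest admissible $|\delta|$ is precisely $2\pi b/(np)$, by the very definition of $b$. For this $\delta$ the numerator equals $|F_n(x_M)|=L_n$ exactly, by the $2\pi$-periodicity of $F_n$. Since $x_0$ is a simple zero of $F_n$ with $|f_n(x_0)|=D_n$, a first-order Taylor expansion gives $|F_n(x_0+\delta)|=D_n|\delta|(1+o(1))$ as $\delta\to 0$; as $|\delta|=2\pi b/(np)\to 0$ when $p\to\infty$, for $p$ sufficiently large this quantity is at most $(1+\varepsilon')D_n|\delta|$ for any prescribed $\varepsilon'>0$. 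Hence
$$L_{np}\ge|F_{np}(x_0+\delta)|=\frac{L_n}{|F_n(x_0+\delta)|}>(1-\varepsilon)\frac{L_n\cdot np}{2b\pi D_n}.$$
The only mildly delicate point is the uniformity of the Taylor estimate: the second derivative of $F_n$ is bounded on $\mathbb{R}$ by a constant depending only on $n$, so the error is $O(\delta^2)$ and the relative error is $O(|\delta|)=O(1/p)$; one also needs $x_0+\delta$ to avoid the other zeros of $F_n$, but the nearest one sits at distance $\ge 2\pi/n$ from $x_0$, comfortably larger than $|\delta|$ for $p$ large.

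For the second inequality, the goal is to arrange $b<(\omega+1)/2$. If $p\equiv rt_0^{-1}\pmod n$ then $pt_0\equiv r\pmod n$, so $b=\min_k|nx_M/(2\pi)-r+nk|$; since $n$ vastly exceeds $\omega+1$, the minimum is attained at $k=0$ and therefore $b=|nx_M/(2\pi)-r|$. It remains to find $r$ coprime to $n$ within distance $(\omega+1)/2$ of $nx_M/(2\pi)$. The hypothesis $p_1>\omega$ forces $p_i\ge\omega+1$ for every $i$, so each $p_i$ divides at most one element of any block of $\omega+1$ consecutive integers; by the pigeonhole principle, at least one integer in such a block is coprime to $n$. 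Consequently the integer coprime to $n$ nearest to $nx_M/(2\pi)$ lies within $(\omega+1)/2$ of it, and feeding this $b\le(\omega+1)/2$ into the first bound yields the claimed estimate.
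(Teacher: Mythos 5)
Your proposal is correct and follows essentially the same route as the paper: you use the identity $F_{np}(x)=F_n(px)/F_n(x)$, evaluate at the point near $x_0$ whose image under multiplication by $p$ lands on $x_M$ modulo $2\pi$ (your $x_0+\delta$ is exactly the paper's $(x_M+2k_0\pi)/p$), linearize $F_n$ at the simple zero $x_0$ to estimate the denominator by $D_n|\delta|=2\pi bD_n/(np)$, and finish the second part with the same pigeonhole observation that, since $p_1>\omega$, any block of $\omega+1$ consecutive integers contains one coprime to $n$, giving $b\le(\omega+1)/2$. The only cosmetic difference is the choice of parameterization; the key ideas and estimates coincide.
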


\begin{proof}
We have $F_n(x)=\frac{F_n(px)}{F_n(x)}$, so
$$L_{np} = \max_{0\le x < 2\pi}\left|\frac{F_n(px)}{F_n(x)}\right| \ge \max_{k\in\mathbb{Z}}\frac{|F_n(x_M+2k\pi)|}{\left|F_n\left(\frac{x_M+2k\pi}{p}\right)\right|} = \frac{L_n}{\min_{k\in\mathbb{Z}}\left|F_n\left(\frac{x_M+2k\pi}{p}\right)\right|}.$$
Let $k_0$ be a integer for which $\left|\frac{x_M+2k_0\pi}{p}-x_0\right|$ is smallest possible. Then
\begin{align*}
\min_{k\in\mathbb{Z}}\left|F_n\left(\frac{x_M+2k\pi}{p}\right)\right| & \le \left|F_n\left(\frac{x_M+2k_0\pi}{p}\right)\right| \\
& \sim |f_n(x_0)|\cdot\left|\frac{x_M+2k_0\pi}{p}-x_0\right| \quad (\text{with } p\to\infty) \\
& = D_n\cdot\frac{2\pi}{np}\cdot\left|\frac{nx_M}{2\pi}-t_0p+k_0n\right| \\
& = D_n\cdot\frac{2b\pi}{np}.
\end{align*}
Therefore
$$L_{np} > (1+o(1))\frac{L_n}{D_n\cdot\frac{2b\pi}{np}} \sim L_n\cdot\frac{np}{2b\pi D_n}$$
with $p\to\infty$, which completes the proof of the first statement.

For $p\equiv \frac r{t_0} \pmod n$ we have
$$b=\min_{k\in\mathbb{Z}}\left|\frac{nx_M}{2\pi}-pt_0+nk\right|=\left|\frac{nx_M}{2\pi}-r\right|\le\frac{\omega+1}2$$
since, in view of $p_1>\omega$, at most $\omega$ consecutive integers are not coprime to $p$.
\end{proof}

Simple calculations show that Theorem \ref{T binary} gives a better lower bound for $L_{p_1p_2}$ than Lemma \ref{L Lnp}. Therefore we use Theorem \ref{T binary} in the proof of the main result. By the fact that $A_n\ge L_n/n$ for $n>1$, Theorem \ref{T large An} is an immediate consequence of the following theorem.

\begin{theorem} \label{T large Ln}
For every $\omega\ge3$, $\varepsilon>0$ and $h:\mathbb{R}_+\to\mathbb{R}_+$ there exists an $h$-growing $n=p_1p_2\ldots p_\omega$ such that $L_n>(1-\varepsilon)c_\omega nM_n$, where $c_\omega$ and $M_n$ are defined in Theorem \ref{T large An}.
\end{theorem}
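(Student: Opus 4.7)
The plan is to build the primes $p_1,p_2,\ldots,p_\omega$ inductively, writing $n_i=p_1p_2\cdots p_i$. I seed the construction with Theorem \ref{T binary} to bound $L_{n_2}$, and then at each step $i\ge 2$ I boost $L_{n_i}$ to $L_{n_{i+1}}$ by combining Lemma \ref{L Lnp} with the upper bound on $D_{n_i}$ from Theorem \ref{T Dn formula}. At every step Dirichlet's theorem on primes in arithmetic progressions supplies the required $p_{i+1}$ in the prescribed residue class and arbitrarily large, so all ``sufficiently large'' hypotheses, including the $h$-growing condition $p_{i+1}\ge h(n_i)$, can be met simultaneously.

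Concretely, fix $\varepsilon'>0$ with $(1-3\varepsilon')^{\omega}>1-\varepsilon$. Pick $p_1>\max\{\omega,h(1)\}$ large, then by Dirichlet select a prime $p_2\equiv 2\pmod{p_1}$ with $p_2>h(p_1)$; this forces $|2a+1|=1$ in Theorem \ref{T binary} and, for $p_2$ large enough, yields $L_{n_2}\ge(1-\varepsilon')\cdot 4p_1p_2/\pi^2$. For $i\ge 2$ read off from the second part of Lemma \ref{L Lnp} the residue class $\rho_i\pmod{n_i}$; since $p_1>\omega$, this class is coprime to $n_i$, so Dirichlet yields primes $p_{i+1}\equiv\rho_i\pmod{n_i}$ that exceed $h(n_i)$, the threshold $h_{i,\varepsilon'}(n_i)$ of Theorem \ref{T Dn formula}, and the asymptotic threshold of Lemma \ref{L Lnp}. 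Substituting the bound $D_{n_i}<(1+\varepsilon')n_i/(2L_{n_1}\cdots L_{n_{i-1}})$ into Lemma \ref{L Lnp} then gives, uniformly in $i\ge 2$,
$$L_{n_{i+1}}>(1-3\varepsilon')\cdot\frac{2p_{i+1}}{\pi(i+1)}\cdot L_{n_1}L_{n_2}\cdots L_{n_i}.$$

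The next step is to unroll this recursion. For $i\ge 3$ the ratio of its instances at $i$ and $i-1$ eliminates the product $L_{n_1}\cdots L_{n_{i-1}}$ and reduces matters to the clean quadratic form $L_{n_{i+1}}\gtrsim\frac{i\,p_{i+1}}{(i+1)\,p_i}L_{n_i}^2$. Iterating from $L_{n_3}\gtrsim(8/(3\pi^3))p_1^2p_2p_3$ (the case $i=2$ of the above) yields
$$L_{n_\omega}\gtrsim L_{n_3}^{2^{\omega-3}}\prod_{k=3}^{\omega-1}\Bigl(\frac{k}{k+1}\cdot\frac{p_{k+1}}{p_k}\Bigr)^{2^{\omega-1-k}}.$$
Routine bookkeeping of exponents shows that the power of $p_j$ on the right equals $2^{\omega-1-j}$ for $1\le j\le\omega-2$ and $1$ for $j\in\{\omega-1,\omega\}$, matching $n_\omega M_{n_\omega}$. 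The scalar factor telescopes: the numerator $k^{2^{\omega-1-k}}$ in $(k/(k+1))^{2^{\omega-1-k}}$ cancels almost entirely against its shifted denominator version, leaving $\omega^{-1}(2/\pi)^{3\cdot 2^{\omega-3}}\bigl(\prod_{k=3}^{\omega-1}k^{2^{\omega-1-k}}\bigr)^{-1}=c_\omega$. Since only $O(\omega)$ factors $(1\pm\varepsilon')$ accumulate along the way, the choice of $\varepsilon'$ ensures the overall constant exceeds $1-\varepsilon$, proving the theorem.

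The main obstacle is purely combinatorial rather than analytic: at every step $p_{i+1}$ must simultaneously lie in a residue class determined by the extremal data of $F_{n_i}$, exceed $h(n_i)$, and exceed the asymptotic thresholds of Lemma \ref{L Lnp} and Theorem \ref{T Dn formula}. All these constraints depend only on the finite data $p_1,\ldots,p_i$, so they can be absorbed into a single enlarged growth function $h^\ast\ge h$, and Dirichlet's theorem then supplies the required prime. The only other place where care is needed is verifying that the number of accumulated $(1\pm\varepsilon')$-factors grows only linearly in $\omega$, which is transparent from the inductive setup.
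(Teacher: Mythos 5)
Your overall strategy is the same as the paper's: seed with Theorem \ref{T binary} (forcing $|2a+1|=1$ via $p_2\equiv2\pmod{p_1}$), then iterate the step given by Lemma \ref{L Lnp} combined with the upper bound on $D_{n_i}$ from Theorem \ref{T Dn formula}, with Dirichlet's theorem supplying primes in the required residue classes above all the accumulated thresholds. The exponent accounting (power of $p_j$ equals $2^{\omega-1-j}$ for $j\le\omega-2$ and $1$ for $j\in\{\omega-1,\omega\}$) and the telescoping of the constant down to $c_\omega$ are both correct, and match the paper's computation. Where you differ is in how the recursion is unrolled: the paper runs a strong induction on $\omega$ in which the inductive hypothesis asserts $L_{n_i}>(1-\varepsilon')c_i\,n_i M_{n_i}$ with a \emph{single} $(1-\varepsilon')$ factor for every $3\le i\le\omega$, and then picks a fresh $\varepsilon'=1-\sqrt[\omega+1]{1-\varepsilon}$ at each level, so only $\omega+1$ error factors ever appear; you instead do a one-pass greedy construction and track a single $\varepsilon'$ throughout.

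This is where a genuine error enters. You claim that "only $O(\omega)$ factors $(1\pm\varepsilon')$ accumulate," and accordingly set $(1-3\varepsilon')^{\omega}>1-\varepsilon$. That is not correct for your unrolling. Write $B_i$ for the explicit lower bound you prove on $L_{n_i}$, so $B_1=p_1$, $B_2=(1-\varepsilon')\frac{4}{\pi^2}p_1p_2$, and $B_{i+1}=(1-3\varepsilon')\frac{2p_{i+1}}{\pi(i+1)}B_1\cdots B_i$. Then indeed $B_{i+1}=\frac{i\,p_{i+1}}{(i+1)p_i}B_i^2$ for $i\ge3$ (the $(1-3\varepsilon')$ cancels in the ratio of defining equations), but the initial datum $B_3=(1-3\varepsilon')(1-\varepsilon')\cdot\frac{8}{3\pi^3}p_1^2p_2p_3$ carries error factors that are then squared at every step. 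Concretely, $B_\omega$ carries $(1-3\varepsilon')^{2^{\omega-3}}(1-\varepsilon')^{2^{\omega-3}}$, i.e., an exponential, not linear, pile-up. (Equivalently, if $B_j\ge(1-\delta_j)\cdot(\text{exact bound})$ with $\delta_2\approx\varepsilon'$, the recursion gives $\delta_{i+1}\approx3\varepsilon'+\sum_{j\le i}\delta_j$, whence $\delta_\omega\asymp 2^{\omega}\varepsilon'$.) Since $\omega$ is fixed this is harmless — replace your choice of $\varepsilon'$ by one satisfying, say, $(1-3\varepsilon')^{2^{\omega}}>1-\varepsilon$ — but as written the argument does not close. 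A related cosmetic point: one cannot literally "take the ratio" of the two inequalities $L_{n_{i+1}}>A_i\prod_{j\le i}L_{n_j}$ and $L_{n_i}>A_{i-1}\prod_{j<i}L_{n_j}$ to deduce $L_{n_{i+1}}\gtrsim\frac{A_i}{A_{i-1}}L_{n_i}^2$, because the second inequality gives an upper, not lower, bound on $\prod_{j<i}L_{n_j}$. The quadratic recursion is correct as a statement about the explicit lower bounds $B_i$, and that suffices; it should just be phrased that way.

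Beyond that, everything checks out and the route is essentially the paper's. The paper's re-parameterization of $\varepsilon'$ inside a strong induction is the cleaner bookkeeping device; your one-pass version buys a slightly more transparent telescoping of the product, at the cost of exponentially many $\varepsilon'$-factors, which you must absorb with a smaller $\varepsilon'$.
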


\begin{proof}
We prove this by a strong induction on $\omega=\omega(n)$. The induction starts with $\omega=2$.

Our inductive assumption is that for all $\varepsilon'>0$ and a function $h:\mathbb{R}_+\to\mathbb{R}_+$ there exists an $h$-growing $n=p_1p_2\ldots p_\omega$ such that $L_{p_1p_2}>(1-\varepsilon')\frac4{\pi^2}p_1p_2$ and $L_{p_1p_2\ldots p_i}>(1-\varepsilon')c_ip_1p_2\ldots p_iM_{p_1p_2\ldots p_i}$ for $3\le i \le \omega$. By Theorem \ref{T binary} it is true for $\omega=2$ with $p_1\mid q_1-2$ (note that the second part of the inductive assumption is empty when $\omega=2$).

Now we show the inductive step. Let $\omega\ge2$. Without loss of generality we may assume that $h(1)\ge\omega$. By Lemma \ref{L Lnp} and Dirichlet's theorem on primes in arithmetic progressions, there exists $p_{\omega+1}>h(p_1p_2\ldots p_\omega)$ for which
$$L_{p_1p_2\ldots p_{\omega+1}} > (1-\varepsilon')L_n\cdot\frac{np_{\omega+1}}{\pi(\omega+1)D_n}.$$

By Theorem \ref{T Dn formula} there exists a function $h_1:\mathbb{R}_+\to\mathbb{R}_+$ depending only on $\omega$ and $\varepsilon'$, such that for all $h_1$-growing $n$
$$D_n > (1-\varepsilon')^{-1}\frac n2 \cdot \frac1{L_{p_1}L_{p_1p_2}\ldots L_{p_1p_2\ldots p_\omega}}.$$
Again without loss of generality we can assume that $h(x)>h_1(x)$ for all $x\in\mathbb{R}_+$. In this situation all $h$-growing numbers are also $h_1$-growing, so the above inequality holds for every $h$-growing $n$.

For given $\varepsilon>0$ we choose $\varepsilon'=1-\sqrt[\omega+1]{1-\varepsilon}$. By the above inequalities and the inductive assumption
\begin{align*}
L_{p_1p_2\ldots p_{\omega+1}} & > (1-\varepsilon')^2\cdot\frac{2p_{\omega+1}}{\pi(\omega+1)}\cdot L_{p_1}L_{p_1p_2}\ldots L_{p_1p_2\ldots p_\omega} \\
& > (1-\varepsilon')^{\omega+1} \cdot\frac{2p_{\omega+1}}{\pi(\omega+1)}\cdot p_1 \cdot \frac4{\pi^2}p_1p_2 \cdot \prod_{i=3}^\omega(c_ip_1p_2\ldots p_iM_{p_1p_2\ldots p_i}) \\
& = (1-\varepsilon)\left(\frac8{\pi^3(\omega+1)}\cdot\prod_{i=3}^\omega c_i\right)\left(p_{\omega+1}\prod_{i=1}^\omega(p_1p_2\ldots p_iM_{p_1p_2\ldots p_i})\right).
\end{align*}
The exponent of $p_k$ in $\prod_{i=1}^\omega(p_1p_2\ldots p_iM_{p_1p_2\ldots p_i})$ for $k\le\omega$ equals
$$\omega-k+1 + \sum_{i=k+2}^\omega(2^{i-k-1}-1) = 2^{\omega-k},$$
so
$$p_{\omega+1}\prod_{i=1}^\omega(p_1p_2\ldots p_iM_{p_1p_2\ldots p_i})= p_1p_2\ldots p_{\omega+1}M_{p_1p_2\ldots p_{\omega+1}}.$$

It remains to evaluate the constant by using a similar method:
\begin{align*}
\frac8{\pi^3(\omega+1)}\cdot\prod_{i=3}^\omega c_i & = \frac8{\pi^3(\omega+1)}\cdot\prod_{i=3}^\omega\left(\frac1i\cdot\left(\frac2\pi\right)^{3\cdot2^{i-3}}\cdot\left(\prod_{k=3}^{i-1}k^{2^{i-1-k}}\right)^{-1}\right) \\
& = \frac1{\omega+1}\cdot\left(\frac2\pi\right)^{3\cdot2^{\omega-2}}\cdot\frac1{3\cdot4\cdot\ldots\cdot\omega}\cdot\left(\prod_{i=3}^\omega\prod_{k=3}^{i-1}k^{2^{i-1-k}}\right)^{-1} \\
& = \frac1{\omega+1}\cdot\left(\frac2\pi\right)^{3\cdot2^{\omega+1-3}} \cdot \left(\prod_{t=3}^{\omega+1-1}t^{2^{\omega+1-1-t}}\right)^{-1} = c_{\omega+1}
\end{align*}
for $\omega\ge2$.

Note that $\varepsilon'<\varepsilon$, so by the inductive assumption also $L_{p_1p_2}>(1-\varepsilon)\frac4{\pi^2}p_1p_2$ and $L_{p_1p_2\ldots p_i}>(1-\varepsilon)c_ip_1p_2\ldots p_iM_{p_1p_2\ldots p_i}$ for all $3\le i \le \omega$. It completes the inductive step.
\end{proof}

\section*{Acknowledgment}

The author is partially supported by NCN grant no.~2012/07/D/ST1/02111.

The author would like to thank Pieter Moree for the invitation to Max Planck Institute in Bonn and for the good time spent there.


\begin{thebibliography}{gg}
\bibitem{Bachman-TernaryBounds} G.~Bachman, \emph{On the coefficients of ternary cyclotomic polynomials}, J. Number Theory \textbf{100} (2003), 104--116.
\bibitem{BatemanPomeranceVaughan-Size} P.T.~Bateman, C.~Pomerance, R.C.~Vaughan, \emph{On the size of the coefficients of cyclotomic polynomials}, Coll. Math. Soc. J. Bolyai \textbf{34} (1981), 171--202.
\bibitem{Bzdega-Height} B.~Bzd\c{e}ga, \emph{On the height of cyclotomic polynomials}, Acta Arith. \textbf{152} (2012), 349--359.
\bibitem{GallotMoree-BeiterCounter} Y.~Gallot, P.~Moree, \emph{Ternary cyclotomic polynomials having a large coefficient}, J. Reine Angew. Math. \textbf{632} (2009), 105--125.
\bibitem{KonyaginMaierWirsing-ManyPrimes} S.~Konyagin, H.~Maier, E.~Wirsing, \emph{Cyclotomic polynomials with many primes dividing their orders}, Periodica Math. Hungar. \textbf{49} (2004), 99--106.
\bibitem{Maier-Anatomy} H.~Maier, \emph{Anatomy of integers and cyclotomic polynomials}, CRM Proc. and Lecture Notes \textbf{46} (2008), 89--95.
\end{thebibliography}
\end{document}